\renewcommand\theequation{\thesection.\arabic{equation}}
\newtheorem{thm}{Theorem}[section]
\newtheorem{prop}[thm]{Proposition}
\newtheorem {conj}[thm]{Conjecture}
\newtheorem {ques/conj}[thm]{Question/Conjecture}
\newtheorem{defn}[thm]{Definition}
\newtheorem{rmk}[thm]{Remark}
\newcommand{\BC}{{\mathbb {C}}}
\newcommand{\BZ}{{\mathbb {Z}}}
\newcommand{\CE}{{\mathcal {E}}}
\newcommand{\CG}{{\mathcal {G}}}
\newcommand{\GL}{{\mathrm{GL}}}
\newcommand{\GSp}{{\mathrm{GSp}}}
\newcommand{\GSO}{{\mathrm{GSO}}}
\newcommand{\Hom}{{\mathrm{Hom}}}
\newcommand{\PGL}{{\mathrm{PGL}}}
\newcommand{\SL}{{\mathrm{SL}}}
\newcommand{\SO}{{\mathrm{SO}}}
\newcommand{\Sp}{{\mathrm{Sp}}}
\newcommand{\tr}{{\mathrm{tr}}}
\numberwithin{equation}{subsection}
\newcounter{keepeqno}
\begin{document}
	
\title{A conjecture for multiplicities of strongly tempered spherical varieties}

\author{Chen Wan}
\address{Department of Mathematics \& Computer Science\\
Rutgers University – Newark\\
Newark, NJ 07102, USA}
\email{chen.wan@rutgers.edu}

\author{Lei Zhang}
\address{Department of Mathematics\\
National University of Singapore, Singapore}
\email{matzhlei@nus.edu.sg}

\date{}

\subjclass[2020]{Primary 22E30 22E35 22E50}

\keywords{local multiplicity of spherical varieties, strongly tempered spherical varieties, epsilon dichotomy}

\begin{abstract}
In this paper, we form a conjecture about the multiplicities of all the strongly tempered spherical varieties without Type N root for tempered representations. This generalizes the epsilon dichotomy conjectures in \cite{GGP} and \cite{WZ}.
\end{abstract}

\maketitle

\section{Introduction}
Let $F$ be a local field of characteristic 0, $G$ be a connected reductive group defined over $F$ and $H$ be a closed connected subgroup of $G$. Assume that $H$ is a spherical subgroup of $G$ (i.e. $H$ admits an open orbit in the flag variety of $G$). We say the spherical pair $(G, H)$ is reductive if $H$ is reductive. 

We say that the spherical pair $(G, H)$ is the Whittaker induction of a reductive spherical pair $(G_0, H_0)$ if there exists a parabolic subgroup $P=MN$ of $G$ and a generic character $\xi$ of $N(F)$ such that $M\simeq G_0$ and $H_0$ is contained in the neutral component of the stabilizer of the character $\xi$ in $M$ under the adjoint action. If this is the case, we say $(G,H)$ is the Whittaker induction of $(G_0,H_0,\xi)$ (if $H$ is already reductive, we can just let $(G_0,H_0,\xi)=(G,H,1)$). In this paper, we will restrict ourselves to the case when $(G, H)$ is the Whittaker induction of a reductive spherical pair $(G_0, H_0,\xi)$. We can extend the character $\xi$ to $H(F)$ by making it trivial on $H_0(F)$. For an irreducible smooth representation $\pi$ of $G(F)$ whose central character is trivial on $Z_{G,H}(F)=Z_G(F)\cap H(F)$, we define the multiplicity
$$m(\pi,\xi)=\dim(\Hom_{H(F)}(\pi,\xi)).$$
To simplify the notation we will use $m(\pi)$ instead of $m(\pi,\xi)$ to denote the multiplicity if the choice of $\xi$ is clear. We say that the representation is $(H,\xi)$-distinguished (or just $H$-distinguished if the choice of $\xi$ is clear) if the multiplicity is nonzero. One of the fundamental problems in the {\it Relative Langlands Program} is to study the multiplicity $m(\pi,\xi)$. In general, one expects the multiplicity $m(\pi,\xi)$ to be finite and to detect some functorial structures of $\pi$. We refer the reader to \cite{SV} for a detailed discussion of these types of problems.

Among all spherical pairs, there is a special category called strongly tempered spherical pairs. More precisely, when $H$ is reductive, we say the pair $(G, H)$ is strongly tempered if all the matrix coefficients of tempered representations of $G(F)$ are integrable on $H(F)/Z_{G, H}(F)$ (here $Z_G$ is the center of $G$ and $Z_{G, H}=Z_G\cap H$). When $H$ is not reductive and if the model $(G, H)$ is the Whittaker induction of a reductive spherical pair $(G_0, H_0,\xi)$, then we say that the pair $(G, H)$ is strongly tempered if and only if $(G_0, H_0)$ is strongly tempered. According to the general conjecture of Sakellaridis and Venkatesh in Conjecture 16.5.1 of \cite{SV}, for a strongly tempered spherical pair $(G, H)$, if we assume that the spherical varieties $X=G/H$ do not have a Type N spherical root (we refer the reader to Section 3.1 of \cite{SV} for the definition of spherical roots), then almost all the tempered local Vogan $L$-packets of $G(F)$ should contain at least one $(H,\xi)$-distinguished representation (i.e. almost all tempered local Vogan $L$-packets are $(H,\xi)$-distinguished). The key point is that in a strongly tempered case, the $L$-group of the spherical variety $X=G/H$ is the $L$-group of $G$, and hence one expects that almost all tempered local Vogan $L$-packet should be distinguished. Moreover, if the spherical variety only has one open Borel orbit over the local field $F$, then the general conjecture of Sakellaridis and Venkatesh predicts that almost all tempered local Vogan $L$-packets of $G(F)$ should contain exactly one $(H,\xi)$-distinguished representation (this is usually called a strong multiplicity one on $L$-packets). In general, we expect the multiplicity of each tempered local Vogan $L$-packet of $G(F)$ to be equal to the number of open Borel orbits of $X(F)$.

The most famous examples of strongly tempered spherical pairs without Type N root are the so-called Gan--Gross--Prasad models $(\SO_{n+2k+1}\times \SO_n,\SO_n\ltimes N)$ and $(U_{n+2k+1}\times U_n, U_n\ltimes N)$. Here $U$ is some unipotent subgroup. For these cases, the local conjecture was formulated by Gan, Gross, and Prasad in Section 17 of \cite{GGP}. In it, they not only conjectured the property of strong multiplicity one on generic $L$-packets (i.e. each generic L-packet contains a unique distinguished element and its multiplicity is equal to one), but they also conjectured about the unique distinguished element in each $L$-packet. More precisely, for each local $L$-packet $\Pi_\phi$  ($\phi:W_F'\rightarrow {}^LG$ is a Langlands parameter), let $Z_\phi$ be the centralizer of the parameter and $S_\phi=Z_\phi/Z_{\phi}^{\circ}$ be its component group. The local Langlands conjecture states that there is a natural bijection between the $L$-packet and the set of irreducible representations of $S_\phi$ (denoted by $\hat{S}_\phi$). In Section 17 of \cite{GGP}, they defined a quadratic character of $S_\phi$ using some local epsilon factor and conjectured that the unique distinguished element in a generic $L$-packet is the one associated with this quadratic character. This is usually called the epsilon dichotomy conjecture. In our previous paper \cite{WZ}, we formulated the epsilon dichotomy conjecture for 10 strongly tempered spherical varieties and we proved the conjecture in many cases including all the archimedean cases.

In this paper, we will make a general epsilon dichotomy conjecture for the Whittaker induction of any strongly tempered spherical varieties without Type N root, and in Section \ref{sec multiplicity one case} we will show that our conjecture recovers the conjectures in \cite{GGP} and \cite{WZ}. The most important advantage of our conjecture is that, unlike the conjectures in \cite{GGP} and \cite{WZ}, our conjecture does not rely on specific knowledge of the component group $S_\phi$ and the centralizer $Z_\phi$ (in \cite{GGP}, the authors wrote down the component group $S_\phi$ explicitly and then define the character on it; in \cite{WZ}, we specifically write down elliptic elements in $Z_\phi$ and then define the function on it explicitly). The reason we can do this is that based on all the existing examples of strongly tempered spherical varieties, we find that the L-function associated with strongly tempered spherical varieties should satisfy a property called anomaly free. During our preparation of this paper, we were very happy to learn that in the work of Ben-Zvi--Sakellaridis--Venkatesh \cite{BSV}, they also find the same property and it also serves as a key ingredient in their proposed relative Langlands duality (the name ``anomaly free" comes from their paper). We refer the reader to Section 2 for more details. One of the key points for anomaly free is that it allows us to take ``square root" of the local epsilon factor (or the global L-function in the setting of \cite{BSV}).

Another advantage of our conjecture is that it applies to a general strongly tempered case, we do not even need to assume that the model has a unique open Borel orbit (in particular, it may not have strongly multiplicity one over the L-packet). In Section \ref{sec non-multiplicity one case}, we will discuss some examples with more than one open Borel orbit, and we will show that our conjecture holds for these models.

\begin{rmk}
In \cite{Pras}, Prasad gave a beautiful conjecture for the multiplicity of Galois model $(G, H)=(Res_{E/F}H, H)$ where $E/F$ is a quadratic extension. The Galois model case and the strongly tempered case are the two extreme cases in terms of the behavior of multiplicity. The Galois model case is purely related to functoriality, while the strongly tempered case is purely related to the epsilon dichotomy. We believe for general spherical variety without Type N root, the behavior of the multiplicity should lie in between these two extreme cases. In other words, it should be a combination of functoriality and epsilon dichotomy. An example would be the Guo-Jacquet model $(\GL_{2n}(F),\GL_n(E))$ for which the multiplicity is related to both the functoriality and certain epsilon factor. We are currently trying to combine these two conjectures to make a conjecture of the multiplicity for general spherical variety without Type N root.
\end{rmk}

The paper is organized as follows. In Section 2 we will discuss the endoscopic datum, the local Langlands conjecture, and the anomaly free representation of L-groups. Then we will state our conjecture. In Section 3 we will show that our conjecture recovers the conjectures in \cite{GGP} and \cite{WZ}. In Section 4, we will prove our conjecture for some cases with more than one open Borel orbit.

\textbf{Acknowledgement}: We thank Raphael Beuzart-Plessis, Tasho Kelatha, Yiannis Sakellaridis, Akshay Venkatesh, and Jun Yu for the helpful discussions.
The work of the first author is partially supported by the NSF grant DMS-2000192 and DMS-2103720. 
The work of the second author is partially supported by AcRF Tier 1 grants 	A-0004274-00-00 and A-0004279-00-00 of the National University of Singapore.

\section{Representation of $L$-group}

\subsection{Extended endoscopic triple}
Let $G$ be a connected reductive group defined over $F$. Following Definition 2 of \cite{K}, we say $(G',s,{}^L\eta)$ is an extended endoscopic triple of $G$ if $G'$ is a quasi-split connected reductive group define over $F$, $s$ is a semisimple elment of $\hat{G}$, and ${}^L\eta$ is an L-embedding from ${}^LG'$ into ${}^LG$ such that the image of ${}^L\eta$ commutes with $s$ and it induces an isomorphism between $\hat{G}'$ and $\hat{G}_s$ (here $\hat{G}_s$ is the neutral component of the centralizer of $s$ in $\hat{G}$).

In this paper, we will restrict ourselves to the case when each endoscopic datum $\CE=(G',\CG',s,{}^L\eta)$ of $G$ (we refer the reader to Definition 1 of \cite{K} for the definition of endoscopic datum) is also an extended endoscopic triple (this is equivalent to say that $\CG'$ in the endoscopic datum is an L-group) so that we only need to consider extended endoscopic triple instead of the more complicated endoscopic datum.

\begin{rmk}
This assumption is true in many cases. For example, when $G$ is a classical group, or when the derived group $G_{der}$ of $G$ is simply connected.
\end{rmk}

\subsection{The local Langlands conjecture}\label{sec L-packets}
In this subsection, we recall the local Langlands conjecture in Conjecture E of \cite{K}. Let $G$ be a quasi-split reductive group defined over $F$ and let $\{G_\alpha \mid \alpha\in H^1(F, G)\}$ be the set of pure inner forms of $G$. Let $\Pi_{irr,temp}(G_{\alpha})$ be the set of irreducible tempered representations of $G_{\alpha}(F)$. The local Langlands conjecture states that 
$$ \bigcup_{\alpha\in H^1(F,G)} \Pi_{irr,temp}(G_\alpha)$$ 
is a disjoint union of finite sets (i.e. the local tempered Vogan $L$-packets)
$$\cup_{\phi} \Pi_{\phi}$$
where $\phi$ runs over all the tempered $L$-parameters of $G$ and $$\Pi_{\phi}=\bigcup_{\alpha\in H^1(F,G)} \Pi_{\phi}(G_\alpha)$$
consists of a finite number of tempered representations with $\Pi_{\phi}(G_\alpha)\subset \Pi_{irr,temp}(G_{\alpha})$ such that the following conditions hold.

\begin{itemize}
\item There is a unique generic element in $\Pi_\phi(G)$ with respect to any Whittaker datum of $G$.
\item For the given Whittaker datum, there is a bijection between $\hat{S_\phi}$, the set of irreducible representations of the component group $S_\phi=Z_\phi/Z_{\phi}^{\circ}$ of the Langlands parameter $\phi$ ($Z_\phi$ is the centralizer of $Im(\phi)$ in $\hat{G}$), and $\Pi_\phi$ (denoted by $\pi\leftrightarrow \chi_\pi$) satisfies the following conditions. 
\begin{itemize}
\item The trivial character of $S_\phi$ corresponds to the unique generic element of $\Pi_\phi(G)$ with respect to the given Whittaker datum.
\item For $\alpha\in H^1(F,G)$, the distribution character $$\theta_{\Pi_{\phi}(G_\alpha)}=\sum_{\pi\in \Pi_\phi(G_{\alpha})}\dim(\chi_\pi)\theta_\pi$$ 
is stable. Moreover,  $\iota(G_{\alpha})\theta_{\Pi_\phi(G_\alpha)}$ is the transfer of $\theta_{\Pi_\phi(G)}$ where $\iota(G_\alpha)$ is the Kottwitz sign.
\item For any $\alpha\in H^1(F,G)$ and $\pi \in \Pi_\phi(G_\alpha)$, the restriction of the central character of $\chi_{\pi}$ to $Z(\hat{G})^{\Gamma_F}$ is equal to $\chi_\alpha$. Here $\chi_\alpha$ is the character of $Z(\hat{G})^{\Gamma_F}$ associated to $\alpha$ via the Kottwitz isomorphism. 
Note that the representation $\chi_\pi$ of the component group can be viewed as a representation of the centralizer $Z_\phi$ of the image of $\phi$, the group $Z(\hat{G})^{\Gamma_F}$ belongs to the center of $Z_\phi$ and hence it makes sense to talk about the restriction of the central character of $\chi_\pi$ to $Z(\hat{G})^{\Gamma_F}$.
\item For $s\in S_\phi$ and for an extended endoscopic triple $(G',s',{}^L\eta)$ of $G$ such that $s'\in sZ_{\phi}^{\circ}$ and $\phi$ factors through ${}^L\eta$, let $\Pi_{\phi,s}(G')$ be the corresponding $L$-packet of $G'$ and let $\theta_{\Pi_{\phi,s}(G')}$ be the distribution character of that packet (which is a stable character on $G'(F)$). Then for $\alpha\in H^1(F,G)$, the character
$$\theta_{\Pi_\phi,\alpha,s}=\sum_{\pi\in \Pi_\phi(G_\alpha)} \tr(\chi_\pi(s))\theta_\pi$$
is the endoscopic transfer of $\iota(G_\alpha)\theta_{\Pi_{\phi,s}(G')}$.
\end{itemize}
\end{itemize}

\subsection{Anomaly free representation of L-groups}
Given a symplectic representation $\rho_X:{}^LG\rightarrow \GL(V)$ of ${}^LG$, for an extended endoscopic triple $(G',s,{}^L\eta)$, let $V_{s,-}$ be the $-1$-eigenspace of $\rho_X(s)$. Then the extended endoscopic triple induces a symplectic representation of ${}^LG'$ on $V_{s,-}$ which will be denoted by $\rho_{X,s,{}^L\eta,-}$.

\begin{defn}\label{defn:anomaly} (see also Definition 5.1.2 and Proposition 5.1.5 of \cite{BSV})
Assume that $G$ is quasi-split. Let $T\subset G$ be the maximal quasi-split torus. We say that a symplectic representation $\rho_X:{}^LG\rightarrow \GL(V)$ of ${}^LG$ is anomaly free if it satisfies the following two conditions.

\begin{itemize}
\item The restriction of the representation $(\rho_X, V)$ to ${}^LT$ can be decomposed into a direct sum of two representations that are dual to each other, i.e.
$$(\rho_X|_{{}^LT},V)\simeq (\rho,W)\oplus (\rho^\vee,W).$$
\item There exists a character $\chi$ of ${}^LT$ and a character $\eta$ of ${}^LG'$ such that $\det(\rho)=\chi^2\cdot \eta|_{{}^LT}$.
\end{itemize}
\end{defn}

\begin{rmk}\label{rmk:anomaly-cases}
\begin{enumerate}
\item If $G$ is split ($\iff$ $T$ is split), the first condition in the definition is always true.
\item The second condition in the definition does not depend on the decomposition $\rho_X=\rho\oplus \rho^\vee$ in the first condition. 
\item \label{item:dist-pol} If $\rho_X=\rho_0\oplus \rho_{0}^\vee$ where $\rho_0$ is a representation of ${}^LG$, then it is anomaly free.
\item If $\rho_X=\rho_1\oplus \rho_2$ with $\rho_i$ being a representation of ${}^LG$ that is anomaly free, then $\rho_X$ is anomaly free.
\end{enumerate}
\end{rmk}

\begin{defn}
We say the symplectic representation $\rho_X$ of ${}^LG$ is anomaly free under endoscopy if for every extended endoscopic triple $(G',s,{}^L\eta)$ of $G$, the symplectic representation $\rho_{X,s,{}^L\eta,-}$ of ${}^LG'$ is anomaly free.
\end{defn}

\begin{rmk}
If $G$ is split adjoint, all its endoscopic groups are split. Then  $\rho_X$ is anomaly free under endoscopy if for any $s\in \hat{G}_{ss}$, the representation of $\hat{G}_s$ on $V_{s,-}$ is anomaly free.
\end{rmk}

\subsection{Multiplicity for strongly tempered spherical varieties}

Let $(G, H)$ be a strongly tempered spherical pair that is the Whittaker induction of $(G_0, H_0,\xi)$. We assume that $G$ has a quasi-split pure inner form and we let $G_{qs}$ be the quasi-split pure inner form of $G$. We also assume that $(Res_{E/F}G_0, Res_{E/F}H_0)$ is strongly tempered for any finite field extension $E$ of $F$ \footnote{this is to avoid those models that are only strongly tempered because $G_0$ is not split. For example, if $G_0$ is compact (say isomorphic to $\SL_1(D)$ for some division algebra $D$ over $F$), then even the model $(G_0, G_0)$ is strongly tempered but it is not strongly tempered after a suitable finite field extension}.

The L-group of the spherical variety $X=H\backslash G$ should be ${}^L G_X={}^LG/Z_{G,H}$ \footnote{this is not true if we do not assume $(Res_{E/F}G_0,Res_{E/F}H_0)$ is strongly tempered for any finite field extension $E$ of $F$}. According to the work of Sakallaridis and Wang (\cite{Sa}, \cite{SW}), there is a representation $\rho_X:{}^LG_X\rightarrow \GL(V)$ of ${}^LG_X$ associated to $(G, H,\xi)$ so that the square of the global period integral associated to $X$ should be related to the central value of the automorphic L-function associated to $\rho_X$. To continue our discussion, we assume the following conjecture (see a similar assumption in Section 5 of \cite{BSV}).

\begin{conj}\label{conj anomaly endoscopy}
The representation $\rho_X$ is symplectic and anomaly free under endoscopy.
\end{conj}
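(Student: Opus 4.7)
The plan is a case-by-case verification grounded in the classification of strongly tempered spherical pairs without Type N root (after Knop--Schalke and subsequent refinements). For each Whittaker-induced pair $(G, H)$ of $(G_0, H_0, \xi)$, the representation $\rho_X$ can be computed explicitly from the combinatorial data of $X$ via the Sakellaridis--Wang recipe. The symplectic structure on $V$ should be extracted from the absence of Type N roots, which are precisely the spherical roots that would force an orthogonal summand in $V$; their absence leaves room for a symplectic pairing.

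First I would dispose of anomaly-free for $\rho_X$ itself. In every known example (the Gan--Gross--Prasad pairs, the ten models of \cite{WZ}, and the small-rank cases that have been worked out) $\rho_X$ decomposes as $\rho_0 \oplus \rho_0^\vee$ for an appropriate representation $\rho_0$ of ${}^LG$, typically a tensor product of standard representations of the factors of $G$; by Remark \ref{rmk:anomaly-cases}(3) this gives anomaly-free immediately. For cases lacking such a global polarization, one verifies the two conditions of Definition \ref{defn:anomaly} directly by computing the ${}^LT$-weight decomposition of $V$ and checking the determinant/square-character relation explicitly on the weight lattice.

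The endoscopy part is where the main obstacle lies. For an extended endoscopic triple $(G', s, {}^L\eta)$, one needs anomaly-free on $V_{s,-}$ as a $\hat{G}_s$-representation. When $\rho_X = \rho_0 \oplus \rho_0^\vee$ is distinguished polarized, for any semisimple $s$ the $-1$-eigenspaces of $\rho_0(s)$ and $\rho_0^\vee(s)$ are dual as $\hat{G}_s$-modules (because $s$ acts on $\rho_0^\vee$ with inverse eigenvalues), so $V_{s,-}$ inherits a distinguished polarization and is anomaly-free. The subtle point is Galois equivariance when $G'$ is non-split: one must confirm that the ${}^L\eta$-twisted Galois action on $\hat{G}_s$ does not interchange the two summands, which becomes a finite combinatorial check once $\rho_0$ is exhibited case by case. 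A more conceptual proof avoiding case analysis would proceed through the stacky framework of Ben-Zvi--Sakellaridis--Venkatesh \cite{BSV}, where anomaly-free corresponds to trivializing a metaplectic-type line bundle on the dual stack and endoscopic restriction is realized as pullback, so functoriality would deliver both statements at once; upgrading this picture to an honest statement at the level of $L$-groups (including the Galois twisting at the endoscopic level) is where the bulk of the work would live.
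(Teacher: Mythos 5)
This statement is a \emph{conjecture} in the paper, not a theorem, and the paper does not give a general proof of it. What the paper does is verify it case by case for the specific models it treats: for the Whittaker model it is immediate because $\rho_X = 0$; for the Gan--Gross--Prasad model and for the unitary Ginzburg--Rallis model it is checked implicitly in the course of computing $\omega_{\phi,\rho_X}$; and for $(E_7, \PGL_2\ltimes N)$ it is the content of the Proposition in Section 3.3, proved by first handling elliptic semisimple $s$ (where the representation $\rho_{X,s,-}$ of $\hat{G}_s$ is listed explicitly using the tables from \cite{WZ} and checked against a short list of anomaly-free building blocks) and then reducing non-elliptic $s$ to the seven maximal Levi subgroups $L_i$ of $\hat{G}$, iterating the argument.

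The central step of your proposal does not hold. You assert that in every known example $\rho_X$ decomposes as $\rho_0\oplus\rho_0^\vee$ as a representation of ${}^LG$, so that Remark~\ref{rmk:anomaly-cases}(\ref{item:dist-pol}) applies. This is false precisely in the motivating examples: for Gan--Gross--Prasad, $\rho_X$ is the (generically irreducible) $4mn$-dimensional tensor product $M\otimes N$; for $E_7$, $\rho_X$ is the irreducible $56$-dimensional minuscule representation. Neither carries a distinguished polarization at the level of ${}^LG$. The polarization in Definition~\ref{defn:anomaly} lives only after restriction to ${}^LT$, and when $G$ is split this much is automatic by weight/coweight pairing (Remark~\ref{rmk:anomaly-cases}(1)); the genuine content of anomaly-free is the second, determinantal condition, which your proposal essentially treats as a consequence of the (nonexistent) polarization. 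Consequently your argument for the endoscopy part, which propagates the assumed $\rho_0\oplus\rho_0^\vee$ splitting to $V_{s,-}$, does not get off the ground for the cases that actually matter. The paper's $E_7$ argument instead verifies the determinant condition directly on the explicit $\rho_{X,s,-}$ for elliptic $s$ (and its Levi reductions), and indeed invokes Remark~\ref{rmk:anomaly-cases}(\ref{item:dist-pol}) only in the sub-case where the restriction to a Levi does split as $\rho\oplus\rho^\vee$ --- namely $L_4,L_6,L_7$ --- not as a global feature of $\rho_X$.

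Your closing remark about the \cite{BSV} perspective is reasonable as a pointer to a conceptual proof, and the paper itself acknowledges the parallel, but the paper does not carry out that program; it simply posits Conjecture~\ref{conj anomaly endoscopy} as a hypothesis (in line with Section~5 of \cite{BSV}) and verifies it for the concrete models at hand.
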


Let $\phi':W_F'\rightarrow {}^LG_X$ be a tempered Langlands parameter. We are going to define a function $\omega_{\phi',\rho_X}$ on $Z_{\phi'}$. For $s\in Z_{\phi'}$, there exists an extended endoscopic triple $(G',s,{}^L\eta)$ of $G$ (not necessarily unique) such that $\phi'$ factors through ${}^L\eta$ (i.e. there exists $\phi_0:W_F'\rightarrow {}^LG'$ such that $\phi'={}^L\eta\circ \phi_0$). Let $T'$ be a maximal quasi-split torus of $G'$. Since $\rho_X$ is anomaly free under endoscopy, the symplectic representation $\rho_{X,s,{}^L\eta,-}$ of ${}^LG'$ is anomaly free. Hence we can decompose the representation $\rho_{X,s,{}^L\eta,-}|_{{}^LT'}$ as $\rho\oplus \rho^\vee$ and there exists a character $\chi$ (resp. $\eta$) of ${}^LT'$ (resp. ${}^LG'$) such that 
$$\det(\rho)=\chi^2\cdot \eta|_{{}^LT}.$$
We define
$$\omega_{\phi',\rho_X}(s)=\eta\circ\phi_0(-1)\epsilon(\frac{1}{2},\rho_{X,s,{}^L\eta,-}\circ \phi_0)\in \{\pm 1\}.$$
It is clear that this definition is independent of the choice of the decomposition $\rho_{X,s,{}^L\eta,-}|_{{}^LT'}=\rho\oplus \rho^\vee$ and $\det(\rho)=\chi^2\eta|_{{}^LT}$. However, it still depends on the choice of the extended endoscopic triple $(G',s,{}^L\eta)$  and the lifting $\phi_0$. To continue our discussion, we assume the following conjecture.

\begin{conj}\label{main conjecture for rho}
The function $\omega_{\phi',\rho_X}$ is well defined (i.e. it is independent of the choice of the extended endoscopic triple and the lifting), it induces a function of $S_{\phi'}$ (i.e. it is constant on each connected component of $Z_{\phi'}$), and it is a character of $S_{\phi'}$.
\end{conj}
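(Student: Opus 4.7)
The plan is to prove the three assertions of the conjecture — well-definedness (independence of the extended endoscopic triple and of the lifting $\phi_0$), constancy on connected components of $Z_{\phi'}$, and the character property — in turn, with the anomaly-free hypothesis (Conjecture \ref{conj anomaly endoscopy}) supplying the key structural input throughout.

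For well-definedness I would address the two sources of ambiguity separately. Two extended endoscopic triples $(G'_i,s,{}^L\eta_i)$ with the same semisimple $s$ are related by an inner automorphism of $\hat{G}$ conjugating one $L$-embedding to the other, so both the pair $(V_{s,-},\rho_{X,s,{}^L\eta,-})$ — canonically the representation of $\hat{G}_s$ on the $(-1)$-eigenspace of $\rho_X(s)$ — and the pulled-back character $\eta|_{\hat{G}_s}$ are intrinsic to $s$. For independence of the lifting, any two lifts of $\phi'$ through ${}^L\eta$ differ by a $1$-cocycle $c$ valued in $\ker({}^L\eta)\subseteq Z(\hat{G}')$; the resulting change in $\epsilon(\tfrac12,\rho_{X,s,-}\circ\phi_0)$ is controlled by $\det(\rho_{X,s,-})(c)$, and by the anomaly-free identity $\det(\rho|_{{}^LT'})=\chi^2\cdot\eta|_{{}^LT'}$ this takes the form $\chi^2(c)\cdot\eta(c)$, the first factor a square and the second cancelled by the simultaneous change in $\eta\circ\phi_0(-1)$. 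Constancy on connected components then follows from a short continuity argument: as $s$ varies within a component, $V_{s,-}$ is locally constant in dimension and $\epsilon(\tfrac12,\rho_{X,s,-}\circ\phi_0)$ varies continuously with values in the discrete set $\{\pm 1\}$, hence is constant.

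The heart of the argument is the character property $\omega(s_1s_2)=\omega(s_1)\omega(s_2)$. Commuting $s_1,s_2\in Z_{\phi'}\subset\Sp(V)$ simultaneously diagonalize $V$ into joint eigenspaces $V^{\epsilon_1,\epsilon_2}$ for $\epsilon_1,\epsilon_2\in\{\pm 1\}$; since $s_1,s_2$ preserve the symplectic form, the pairing $V^{\epsilon_1,\epsilon_2}\times V^{\epsilon_1',\epsilon_2'}\to F$ vanishes unless $(\epsilon_1',\epsilon_2')=(\epsilon_1,\epsilon_2)$, so each joint eigenspace is itself symplectic. The combinatorial identity $V_{s_1,-}\oplus V_{s_2,-}=V_{s_1s_2,-}\oplus(V^{-,-})^{\oplus 2}$ together with multiplicativity of epsilon factors yields
\begin{equation*}
\epsilon(\tfrac12,\rho_{X,s_1,-}\circ\phi_0)\cdot\epsilon(\tfrac12,\rho_{X,s_2,-}\circ\phi_0) \;=\; \epsilon(\tfrac12,\rho_{X,s_1s_2,-}\circ\phi_0)\cdot\epsilon(\tfrac12,V^{-,-}\circ\phi_0)^2,
\end{equation*}
and the last factor equals $1$ since $V^{-,-}$ is symplectic, so that its epsilon factor already lies in $\{\pm 1\}$. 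The character property thus reduces to the compatibility $\eta_{s_1s_2}(\phi_0(-1))=\eta_{s_1}(\phi_0(-1))\cdot\eta_{s_2}(\phi_0(-1))$ of the characters supplied by anomaly freeness for the three extended endoscopic triples.

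The hardest step is this last compatibility of the characters $\eta_s$. Each $\eta_s$ is determined by Definition \ref{defn:anomaly} only up to squares, and the three extended endoscopic triples for $s_1,s_2,s_1s_2$ do not come with a canonical common refinement. I expect the resolution is to make coherent choices of the splittings $\rho_{X,s,-}|_{{}^LT'}=\rho\oplus\rho^\vee$ and the characters $\chi,\eta$ by working on a maximal torus inside the common centralizer $\hat{G}_{s_1}\cap\hat{G}_{s_2}$ and exploiting the fact that the doubled piece $(V^{-,-})^{\oplus 2}$ contributes exactly a square into the determinant bookkeeping; once this coherent normalization is in place, the character property is forced by the epsilon-factor identity above.
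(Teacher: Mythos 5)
The statement is a conjecture that the paper does not prove. The authors explicitly assume it in order to continue, and verify it only for specific models in Section~\ref{sec multiplicity one case} (the Whittaker model trivially, the Gan--Gross--Prasad models via the computation from Section 6 of \cite{GGP}, and the ten models from \cite{WZ}) and in Section~\ref{sec non-multiplicity one case}. There is therefore no ``paper's own proof'' of the general statement against which to compare your sketch.

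Taking the sketch on its own terms: the joint-eigenspace decomposition for commuting $s_1,s_2\in Z_{\phi'}$, the observation that invariance of the symplectic form forces each $V^{\epsilon_1\epsilon_2}$ to be symplectic, and the consequent identity $\epsilon(\tfrac12,V_{s_1,-}\circ\phi_0)\,\epsilon(\tfrac12,V_{s_2,-}\circ\phi_0)=\epsilon(\tfrac12,V_{s_1s_2,-}\circ\phi_0)\,\epsilon(\tfrac12,V^{--}\circ\phi_0)^2$ with $\epsilon(\tfrac12,V^{--}\circ\phi_0)^2=1$, form a clean reduction and are not recorded in the paper. But your last paragraph names the actual gap: you need the characters $\eta_s$ supplied by anomaly-freeness to satisfy $\eta_{s_1s_2}(\phi_0(-1))=\eta_{s_1}(\phi_0(-1))\,\eta_{s_2}(\phi_0(-1))$, and this is nowhere established. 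Each $\eta_s$ is pinned down only modulo squares, through a decomposition of $\rho_{X,s,-}|_{{}^LT'}$ with $T'$ depending on the endoscopic group $G'_s$; producing a coherent normalization across the three extended endoscopic triples attached to $s_1$, $s_2$, $s_1s_2$ is not bookkeeping to be finessed later --- it is essentially the content of the conjecture, and ``I expect the resolution is\ldots'' is precisely where a proof would need to begin. Two smaller concerns: the independence-of-lifting argument asserts that two choices of $\phi_0$ differ by a cocycle valued in $\ker({}^L\eta)\subseteq Z(\hat G')$, but for an extended endoscopic triple ${}^L\eta$ restricts to an isomorphism $\hat G'\xrightarrow{\;\sim\;}\hat G_s$, so the kernel meets $\hat G'$ trivially and the genuine ambiguity lies in the extension of $\hat G'\simeq\hat G_s$ to an $L$-embedding of full $L$-groups, which your argument does not engage. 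And the continuity argument for constancy on components presupposes that $s\mapsto\omega_{\phi',\rho_X}(s)$ is visibly a continuous $\{\pm1\}$-valued function, whereas its definition involves a discontinuously varying choice of $(G',{}^L\eta,\phi_0,\chi,\eta)$; one would first need the very well-definedness you are trying to prove before continuity could even be invoked.
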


Let $\phi:W_F'\rightarrow {}^LG$ be a tempered Langlands parameter of $G(F)$. We would like to define a set of irreducible representations of the component group $S_\phi$. 
Let $I'$ be the set of liftings of $\phi$ to ${}^LG_X$. For each lifting $\phi':W_F'\rightarrow {}^LG_X$ of $\phi$, the above discussion gives us a quadratic character $\omega_{\phi',\rho_X}$ of $S_{\phi'}$ and we also have a map $i$ from $S_{\phi'}$ to $S_\phi$ \footnote{This map is not necessarily injective/surjective}. We use $i(S_{\phi'})$ to denote the image of the map $i$. And we let $I$ be the subset of $I'$ containing those $\phi'$ such that the character $\omega_{\phi',\rho_X}$ is trivial on $ker(i)$.

\begin{defn}
Let $\chi_{\phi',\rho_X,i}$ ($1\leq i\leq |S_\phi/i(S_{\phi'})|$) be the irreducible components of $Ind_{i(S_{\phi'})}^{S_\phi}(\omega_{\phi',\rho_X})$, i.e. $Ind_{i(S_{\phi'})}^{S_\phi}(\omega_{\phi',\rho_X})=\oplus_i \chi_{\phi',\rho_X,i}$. We define
$$I(\phi,\rho_X)=\{\chi_{\phi',\rho_X,i}|\;1\leq i\leq |S_\phi/i(S_{\phi'})|,\;\phi'\in I\}.$$
This is a multi-set, some irreducible representations may appear more than once.
\end{defn}

Now we can formulate our conjecture for the multiplicity. We assume that the map $H^1(F, H)\rightarrow H^1(F, G)$ is injective \footnote{If we do not make this conjecture, then our conjecture would be for the multiplicity of $G/H(F)$, instead of $G(F)/H(F)$. We refer the reader to Section 4 for an example of this kind.} (i.e. $G/H(F)=G(F)/H(F)$). Let $G_{qs}$ be the quasi-split pure inner form of $G$. The Whittaker datum of $G_{qs}(F)$ is a $ker(H^1(F,Z_{G})\rightarrow H^1(F,G))$-torsor.

\begin{conj}\label{main conj}
Let $\pi$ be an irreducible tempered representation of $G(F)$ whose central character is trivial on $Z_{G, H}(F)$, and let $\phi$ be the Langlands parameter of $\pi$. There exists a choice of Whittaker datum of $G_{qs}$ (only depends on $(G, H,\xi)$, in particular, independent of $\pi$) such that under this choice of Whittaker datum, the multiplicity $m(\pi)$ is equal to the number of irreducible representations in $I(\phi,\rho_X)$ that is equal to $\omega_\pi$. Here $\omega_\pi$ is the irreducible representation of $S_\phi$ associated to $\pi$ under the local Langlands correspondence (with respect to the choice of Whittaker datum).

Moreover, the choice of the Whittaker datum is not necessarily unique. All the possible choices form a $Im(ker(H^1(F,Z_{G,H})\rightarrow H^1(F,H))\rightarrow ker(H^1(F,Z_{G})\rightarrow H^1(F,G)))$-torsor.
\end{conj}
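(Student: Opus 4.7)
The plan is to combine a geometric multiplicity formula with the endoscopic character identities of the local Langlands correspondence, in the style of the Waldspurger--Beuzart-Plessis approach to Gan--Gross--Prasad. First I would establish a multiplicity formula of the schematic shape
\[
m(\pi)\;=\;\sum_{T}|W(H_0,T)|^{-1}\int_{T(F)/Z_{G,H}(F)} D^{H}(t)\,c_\pi(t)\,\theta_\pi(t)\,dt,
\]
where $T$ runs over conjugacy classes of maximal tori in $H_0$, $D^H$ is a Weyl discriminant, and $c_\pi$ is a weighted germ of the Harish-Chandra character at $t$. Strong temperedness of $(G_0,H_0)$ gives the integrability needed to make sense of this, and the Whittaker induction reduces, via Frobenius reciprocity along $P=MN$, to a multiplicity formula for the reductive pair $(G_0,H_0)$. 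The formula is model-specific, so a substantial input; for the cases in Sections 3 and 4 it follows either from existing work (\cite{GGP},\cite{WZ}) or from Jacquet-module / parabolic descent arguments.

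Next I would use this formula to sum over an $L$-packet. The local Langlands recipe recalled in Section \ref{sec L-packets} tells us that $\sum_{\pi\in\Pi_\phi(G_\alpha)}\tr(\chi_\pi(s))\,\theta_\pi$ is the endoscopic transfer of $\iota(G_\alpha)\theta_{\Pi_{\phi,s}(G')}$ for any extended endoscopic triple with $s\in S_\phi$ and $\phi$ factoring through ${}^L\eta$. Applying Fourier inversion on $S_\phi$ to the multiplicity formula turns $m(\pi)$ into
\[
m(\pi)\;=\;\frac{1}{|S_\phi|}\sum_{s\in S_\phi}\tr(\chi_\pi(s)^{-1})\,\Omega_{\phi,\rho_X}(s),
\]
where $\Omega_{\phi,\rho_X}(s)$ is the orbital contribution coming from elliptic tori of $H_0$ lifted to the endoscopic group $G'$. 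Comparing this with the multi-set $I(\phi,\rho_X)$ then amounts to proving that $\Omega_{\phi,\rho_X}(s)$ equals $\omega_{\phi',\rho_X}(s)$ summed over liftings $\phi'\in I$ of $\phi$, and that the non-injectivity/surjectivity of $S_{\phi'}\to S_\phi$ accounts exactly for the induced representation $\Ind_{i(S_{\phi'})}^{S_\phi}\omega_{\phi',\rho_X}$.

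The core of the argument is the identification $\Omega_{\phi,\rho_X}(s)=\omega_{\phi',\rho_X}(s)$. This is where the anomaly-free hypothesis is decisive: it allows the symplectic representation $\rho_{X,s,{}^L\eta,-}|_{{}^LT'}$ to be polarised into $\rho\oplus\rho^\vee$, so that the epsilon factor $\epsilon(\tfrac12,\rho_{X,s,{}^L\eta,-}\circ\phi_0)$ admits a canonical square root, and the determinant correction $\eta\circ\phi_0(-1)$ compensates for the choice of polarisation. The torus-by-torus computation then proceeds along the lines of Section 17 of \cite{GGP} and Sections 6--7 of \cite{WZ}: one expresses the regular germ $c_\pi(t)$ in terms of stable characters on endoscopic groups, uses the transfer factor of Langlands--Shelstad together with the known epsilon-factor identities on elliptic tori, and reads off the sign $\omega_{\phi',\rho_X}(s)$. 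The dependence on Whittaker datum is built in through the normalisation of both the transfer factors and the local Langlands parametrisation, which is why the conjecture is phrased only up to a specific choice (and up to an $\mathrm{Im}(\mathrm{ker}(H^1(F,Z_{G,H})\to H^1(F,H)))$-torsor of choices).

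The hard part will be twofold. First, establishing the geometric multiplicity formula uniformly for all strongly tempered Whittaker-induced pairs is delicate; the argument for GGP uses a specific relative trace formula, and no uniform proof is available. Second, matching the orbital integral side to the arithmetic side $\omega_{\phi',\rho_X}$ requires a general local epsilon-factor identity on elliptic tori that is presently known only case by case. For this reason, as in the paper's Section \ref{sec multiplicity one case} and Section \ref{sec non-multiplicity one case}, I expect progress will initially come model by model, with the anomaly-free framework providing the uniform conceptual statement that each case must verify.
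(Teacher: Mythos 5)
Your proposal essentially reproduces the paper's own roadmap laid out in Section 2.5 — prove a geometric multiplicity formula $m(\pi)=m_{\mathrm{geom}}(\pi)$, study its behaviour under endoscopy, and invert the character identity on $S_\phi$ — and, like the paper, you acknowledge that this is currently only a strategy whose verification proceeds model by model (which is precisely what Sections 3--4 of the paper do). Two points are worth flagging. First, the paper locates the genuinely hard step differently from you: after the endoscopic reduction (and an inductive hypothesis for proper endoscopic groups), the residual difficulty is the ``weak conjecture'' -- the evaluation at $s=1$, i.e.\ that the total multiplicity of the $L$-packet is governed by the sign of the relevant epsilon factor -- for which no endoscopic simplification is available and for which GGP (via twisted GL-models) and the \cite{WZ} cases (via Rankin--Selberg integrals of degenerate principal series) use disjoint ad hoc methods; your framing of the bottleneck as a ``general local epsilon-factor identity on elliptic tori'' somewhat conflates this with the transfer step, which is the part that \emph{is} expected to be uniform. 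Second, you elide a structural gap the paper explicitly names: after passing to an endoscopic group $G'$ there is at present no general recipe (a ``relative endoscopy for strongly tempered spherical varieties'') identifying which model of $G'$ is supposed to control the transferred geometric multiplicity -- this is currently read off by hand for each model, as the paper illustrates for $(E_7,\mathrm{PGL}_2\ltimes N)$. Finally, the claim that Whittaker induction reduces to $(G_0,H_0)$ ``via Frobenius reciprocity along $P=MN$'' is more optimistic than what is actually done in \cite{Wan}, where the Whittaker-induced case receives its own multiplicity formula from the same trace-formula machinery rather than from a clean parabolic descent.
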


\begin{rmk}
The above conjecture is similar to the epsilon dichotomy conjecture for the Gan-Gross-Prasad models in \cite{GGP} and for 10 strongly tempered models in our previous paper \cite{WZ}. But there are two important improvements (both in the definition of $\omega_{\phi',\rho_X}$). First, in \cite{WZ}, when we define $\omega_{\phi',\rho_X}$, we only consider $s\in Z_\phi$ that belongs to an elliptic extended endoscopic triple (this is true for the models in \cite{WZ} but is not true for the general case). In our definition in this paper, we do not require the elliptic condition.

Secondly, in our definition in \cite{WZ}, we explicitly write down $s$ and  define the term $\eta\circ\phi_0(-1)$ in the definition of $\omega_{\phi',\rho_X}$ by an explicit formula. In \cite{GGP}, they explicitly write down a representative for each element of $S_\phi$ and then define the function $\omega_{\phi',\rho_X}$ by an explicit formula. In this paper, we define the term $\eta\circ\phi_0(-1)$ in a conceptual way using the anomaly free property. This is a very important improvement because for general groups (e.g. $E_7, E_8$), it is very hard (at least for us) to explicitly write down the component group $S_\phi$ and representative of elements in $S_\phi$ for the general Langlands parameter.

Another important point in Conjecture \ref{main conj} is that we can consider the case when there is more than one open Borel orbit (i.e. the multiplicity for the L-packet is not necessarily one). This is the first time such a conjecture has been proposed (other than in some lower-rank cases). The key is to use the set of liftings and to consider the induced representation $Ind_{i(S_{\phi'})}^{S_\phi}(\omega_{\phi',\rho_X})$.
\end{rmk}

\subsection{How to prove Conjecture \ref{main conj} and some open questions}
In this subsection, we discuss some ideas about proving Conjecture \ref{main conj} and some open questions. The first step is to prove a multiplicity formula $m(\pi)=m_{geom}(\pi)$ for all tempered representations. Here $m_{geom}(\pi)$ is defined in \cite{Wan} and is called the geometric multiplicity. Such a multiplicity formula has been proved for many strongly tempered spherical varieties such as the Gan-Gross-Prasad models and the models in \cite{WZ}. Moreover, for each given model, it seems that the current trace formula method (invented by Waldspurger in his proof of the orthogonal Gan-Gross-Prasad conjecture \cite{Wal1}, \cite{Wal2}) can be used to prove the multiplicity formula. But it is still not clear at this moment how to write down the proof for the general case without using any feature pertaining to the specific model.

After proving the multiplicity formula, one can study the behavior of the geometric multiplicity under endoscopic. Together with some inductional hypothesis (i.e. we assume the epsilon dichotomy conjecture holds for some models related to the endoscopic group of $G$), we can reduce the proof of Conjecture \ref{main conj} to the computation of the sum of the multiplicity over the L-packet. This idea was invented by Waldspurger in his proof of the orthogonal Gan-Gross-Prasad conjecture \cite{Wal}. As in the proof of the multiplicity formula, it seems that Waldspurger's method can be used for any given model, but it is not clear how to write it for a general case. In particular, if $G'$ is an endoscopic group of $G$, it is not clear in general which models of $G'$ should be related to $(G, H)$. For a specific model, we know the model associated to $G'$ by direct computation, but we do not have a general theory to explain this (i.e. we need a relative endoscopic theory for strongly tempered spherical varieties).

The last step, which is also the most difficult step, is to study the multiplicity of the L-packet. The goal is to relate it to the epsilon factor (under the language of \cite{WZ}, we call this the weak epsilon dichotomy conjecture, or just the weak conjecture). For this step, we do not have a systematic way to solve it at this moment. For the Gan-Gross-Prasad model, this was done by relating the multiplicity of the L-packet to the twisted multiplicity of the Gan-Gross-Prasad model of the general linear group. But this method does not work if the Langlands functoriality $\rho_X:{}^L G\rightarrow \GL(V)$ is not of twisted endoscopic type (in particular it does not apply to any of the cases in \cite{WZ}). For all the models in \cite{WZ} except the model $(\GSp_6\times \GSp_4, G(\Sp_4\times \Sp_2))$, in our recent paper \cite{WZ2}, we proposed a method to prove the weak conjecture using the ``dichotomy" behavior of certain degenerate principal series of $\GSp_6$. The reason this method works is due to the fact that for all the models in \cite{WZ} except the model $(\GSp_6\times \GSp_4, G(\Sp_4\times \Sp_2))$, the epsilon factor can be defined using some local Rankin-Selberg integral involving the degenerate principal series of $\GSp_6$ (in particular this method cannot be used to prove the weak conjecture of the Gan-Gross-Prasad model) \footnote{The reason we exclude the model $(\GSp_6\times \GSp_4, G(\Sp_4\times \Sp_2))$ is that at this moment there is no Rankin-Selberg integral defining the epsilon factor associated to this model.}. It is not clear at this moment how to prove the weak conjecture for the general case (although for all the strongly tempered models we know except the model $(\GSp_6\times \GSp_4, G(\Sp_4\times \Sp_2))$, one of the two methods discussed here can be used to prove the weak conjecture) \footnote{an interesting point is that it seems these two methods are disjoint, we do not know any example where both methods can be used to prove the epsilon dichotomy conjecture.}.

Another open question is regarding the case when $G/H(F)\neq G(F)/H(F)$. In this case, Conjecture \ref{main conj} studies the multiplicity of $G/H(F)$, not the multiplicity of $G(F)/H(F)$. This is compatible with the philosophy of Sakellaridis-Venkatesh in \cite{SV}, but it would be nice to have a conjecture for the multiplicity of $G(F)/H(F)$.

The last open question we will discuss here is about the choice of Whittaker datum in Conjecture \ref{main conj}. In Conjecture \ref{main conj}, we were not able to specify the choice of Whittaker datum, we only conjectured that all the possible choices form an $Im(ker(H^1(F, Z_{G, H})\rightarrow H^1(F, H))\rightarrow ker(H^1(F, Z_{G})\rightarrow H^1(F, G)))$-torsor. Among all the known cases, the Whittaker datum is unique for all the models in \cite{WZ} so this is not an issue; for the Gan-Gross-Prasad model, the Whittaker datum is not unique, and in Section 12 of \cite{GGP} they gave a specific choice of the Whittaker model. But at this moment we do not know how to generalize it to general strongly tempered spherical varieties.

\subsection{Why do we need anomaly free?}\label{sec anomaly free}
In this subsection, we will explain why we need the condition of anomaly free from the point of view of our paper and the work of Ben-Zvi--Sakellaridis--Venkatesh \cite{BSV}. 

From our point of view, the anomaly free condition is used to define the term $\eta\circ\phi_0(-1)$ in the character
$$\omega_{\phi',\rho_X}(s)=\eta\circ\phi_0(-1)\epsilon(\frac{1}{2},\rho_{X,s,{}^L\eta,-}\circ \phi_0)\in \{\pm 1\}.$$
In all the previous epsilon dichotomy conjectures (\cite{GGP}, \cite{WZ}), this term was defined by an explicit computation. It was given so that for most unramified parameter $\phi_0$, the value of $\omega_{\phi',\rho_X}(s)$ should be equal to 1 (this is because for most unramified parameters the component group is trivial and hence we need the character to also be trivial). With the assumption of anomaly free, we know that for most unramified parameter $\phi_0$, the epsilon factor $\epsilon(\frac{1}{2},\rho_{X,s,{}^L\eta,-}\circ \phi_0)$ is equal to $\eta\circ\phi_0(-1)$ and hence we can define the character $\omega_{\phi',\rho_X}$ in this way.

From the point of view of \cite{BSV}, one of the goals in \cite{BSV} is to equip each Hamiltonian $G$-space with the automorphic quantization, without passing the metaplectic cover of $G$. 
They introduce the notion of ``anomaly-free'' to Hamiltonian $G$-spaces in \cite[Definition 5.1.2]{BSV} and conjecture such symplectic varieties admit an automorphic and spectral quantization. 
When $M$ is a symplectic vector space, Definition \ref{defn:anomaly} is equivalent to their definition (see \cite[Proposition 5.1.5]{BSV}). 
Moreover, Examples 5.1.7 and 5.1.9 in \cite{BSV} give more hyperspecial vector spaces examples and elaborate more detailed connections with our table in \cite{WZ} and our example in Remark \ref{rmk:anomaly-cases}.

\section{Know examples with multiplicity one}\label{sec multiplicity one case}
In this section, we will show that our conjecture recovers the epsilon dichotomy conjecture in \cite{GGP} for the Gan--Gross--Prasad model and the epsilon dichotomy conjecture in \cite{WZ} for 10 strongly tempered models.

\subsection{The Whittaker model}
Let $G$ be a quasi-split reductive group defined over $F$, $N$ be a maximal unipotent subgroup of $G$, and $\xi$ be a generic character of $N(F)$. In this case ${}^LG_X={}^LG$ and the representation $\rho_X$ is zero-dimensional.

In this case, it is clear that Conjecture \ref{conj anomaly endoscopy} and \ref{main conjecture for rho}  are satisfied. Moreover, the set $I(\phi,\rho_X)$ contains a unique element which is the trivial character of $S_\phi$. Then Conjecture \ref{main conj} follows from the local Langlands conjecture (in this case, the choice of Whittaker datum is unique and should be the one associated to $\xi$).

\subsection{The Gan-Gross-Prasad model}
In this subsection, we will show that for the Gan--Gross--Prasad models, Conjecture \ref{main conj} is the same as the epsilon dichotomy conjecture in \cite{GGP}. We will only consider the orthogonal group case $(G, H)=(\SO_{a+2b+1}\times \SO_{a},\SO_{a}\ltimes N)$, the unitary group case follows from a similar argument. In this case, ${}^LG_X={}^LG=\Sp_{2m}(\BC)\times \SO_{2n}(\BC)$ or ${}^LG=\Sp_{2m}(\BC)\times O_{2n}(\BC)$ where $\{2m+1,2n\}=\{a+2b+1,a\}$. And the representation $\rho_X$ is the $4mn$-dimensional tensor product representation of ${}^LG_X$. Moreover, $Z_{G, H}$ is trivial and the choice of Whittaker datum is unique (defined in Section 12 of \cite{GGP}).

Let $\phi: W_F'\rightarrow {}^LG$ be a tempered L-parameter. Let $M$ (resp. $N$) be the self-dual representation of $W_F'$ by composing $\phi$ with the standard representation of $\Sp_{2m}(\BC)$ (resp. $O_{2n}(\BC)$ or $\SO_{2n}(\BC)$). As in Section 4 of \cite{GGP}, we can decompose $M$ and $N$ as
\begin{align*}
 M=&\oplus_{i=1}^{a_1} m_{1i}M_{1i}+\oplus_{i=1}^{a_2} 2m_{2i} M_{2i}+\oplus_{i=1}^{a_3} m_{3i}(M_{3i}\oplus M_{3i}^{\vee})\\
 N=&\oplus_{j=1}^{b_1} n_{1j}N_{1j}+\oplus_{j=1}^{b_2} 2n_{2j} N_{2j}+\oplus_{j=1}^{b_3} n_{3j}(N_{3j}\oplus N_{3j}^{\vee}),
\end{align*} 
where $M_{1i},N_{2j}$ are of symplectic type, $M_{2i},N_{1j}$ are of orthogonal type, and $M_{3i},N_{3j}$ are not self-dual. Then $Z_\phi$ and $S_\phi$ are given by
\begin{align*}
Z_\phi=&\Pi_{i=1}^{a_1} O(m_{1i},\BC)\times \Pi_{i=1}^{a_2} \Sp(2m_{2i},\BC)\times \Pi_{i=1}^{a_3} \GL(m_{3i},\BC)\\ 
&\times \Pi_{j=1}^{b_1} O(n_{1j},\BC)\times \Pi_{j=1}^{b_2} \Sp(2n_{2j},\BC)\times \Pi_{j=1}^{b_3} \GL(n_{3j},\BC)\\ 
S_\phi=&(\BZ/2\BZ)^{a_1}\times (\BZ/2\BZ)^{b_1}.
\end{align*}
We just need to show that the function $\omega_{\phi,\rho_X}$ defined in the previous section is the same as the character $\chi_N\times \chi_M$ defined in Section 6 of \cite{GGP}.

We first recall the definition of $\chi_N\times \chi_M$. For $a_M\in (\BZ/2\BZ)^{a_1}$, let $M^{a_M}=\oplus_{i} M_{1i}$ where $i$ runs over all the components of $a_M$ with $-1$ coordinate. Similarly, we can also define $N^{a_N}$ for $a_N\in (\BZ/2\BZ)^{b_1}$. In Section 6 of \cite{GGP}, they define 
\begin{align*}
\chi_N(a_M)\chi_M(a_N)=&\epsilon(M^{a_M}\otimes N)\epsilon(M\otimes N^{a_N})\det(M^{a_M})(-1)^{\dim(N)/2}\\ 
&\times\det(N)(-1)^{\dim(M^{a_M})/2}\det(N^{a_N})(-1)^{\dim(M)/2}\det(M)(-1)^{\dim(N^{a_N})/2}. 
\end{align*}
Here to simplify the notation, for a symplectic representation $V$ of $W_F'$, we use $\epsilon(V)$ to denote $\epsilon(\frac{1}{2},V)$.

Next, we show that $\omega_{\phi,\rho_X}$ coincides with $\chi_N\times \chi_M$. Let 
$$s=(g_{1i},g_{2i},g_{3i},h_{1j},h_{2j},h_{3j})$$
be an element in $Z_\phi$ with
\begin{align*}
 &g_{1i}\in O(m_{1i},\BC),\;g_{2i}\in \Sp(2m_{2i},\BC),\;g_{3i}\in \GL(m_{3i},\BC),\\ 
 &h_{1j}\in O(n_{1j},\BC),\;h_{2j}\in \Sp(2n_{2j},\BC),\;h_{3j}\in \GL(n_{3j},\BC). 
\end{align*} 
Let $a_M\times a_N$ be the corresponding element in $S_\phi$. We let $I_1$ (resp. $J_1$) be the set of $1\leq i\leq a_1$ (resp. $1\leq j\leq b_1$) such that $g_{1i}\in O(m_{1i},\BC)-\SO(m_{1i},\BC)$ (resp. $h_{1i}\in O(n_{1j},\BC)-\SO(n_{1j},\BC)$) and let $I_2$ (resp. $J_2$) be the complement of $I_1$ (resp. $J_1$) in $\{1,2,\cdots,a_1\}$ (resp. $\{1,2,\cdots, b_1\}$). We let $I_{1,odd}$ (resp. $I_{1,even}$) be the set of $i\in I_1$ such that $m_{1i}$ is odd (resp. $even$). Similarly, we can define $I_{2,odd}, I_{2,even},J_{1,odd}.J_{1,even},J_{2,odd},J_{2,even}$. By Proposition 5.1 of \cite{GGP}, we have
\begin{align*}
\chi_N\times \chi_M(s)=&\Pi_{i\in I_1}\Pi_{1\leq j\leq b_1} \epsilon(M_{1i}\otimes N_{1j})^{n_{1j}} \det(N_{1j})^{\frac{n_{1j}\cdot \dim(M_{1j})}{2}}\\
&\times \Pi_{j\in J_1} \Pi_{1\leq i\leq a_1}\epsilon(M_{1i}\otimes N_{1j})^{m_{1i}}\det(N_{1j})^{\frac{m_{1i}\cdot \dim(M_{1j})}{2}}\\ 
=&\Pi_{(i,j)\in I_1\times J_{2,odd}\cup I_{2,odd}\times J_1\cup I_{1,even}\times J_{1,odd}\cup I_{1,odd}\times J_{1,even}} \epsilon(M_{1i}\otimes N_{1j})\det(N_{1j})^{\frac{\dim(M_{1j})}{2}}.
\end{align*}

Next, we study the $-1$-eigenspace $V_{s,-}$ of $\rho_X(s)$. The eigenspace $V_{s,-}$ is a direct sum of $-1$-eigenspace associated to $g_{ki}\times h_{lj}$ with $1\leq k,l\leq 3$. We will study them separately. 

We first study the $-1$-eigenspace associated to $g_{1i}\times h_{2j}$. By using the tensor representation we can view $g_{1i}\times h_{2j}$ as an element in $\GL(m_{1i}n_{2j})$ and we let $2k$ be the dimension of the $-1$-eigenspace of this matrix (it is easy to see that this dimension is an even number). Then it is easy to see that the $-1$-eigenspace associated to $g_{1i}\times h_{2j}$ is $2k$-copy of $M_{1i}\otimes N_{2j}$. This representation is obviously anomaly free and we can choose the character $\eta$ in the definition of anomaly free to be trivial for this representation. Moreover, by Proposition 5.1 of \cite{GGP}, the epsilon factor associated to it is also equal to 1. Hence the contribution of this $-1$-eigenspace to the character $\omega_{\phi,\rho_X}$ is just 1.

Similarly, we can show that the contribution of the $-1$-eigenspaces coming from 
$$g_{1i}\times h_{3j},\;g_{2i}\times h_{1j},\;g_{2i}\times h_{2j},\;g_{2i}\times h_{3j},\;g_{3i}\times h_{1j},\;g_{3i}\times h_{2j},\;g_{3i}\times h_{3j}$$
to the character $\omega_{\phi,\rho_X}$ is also 1.

It remains to consider the $-1$-eigenspace associated to $g_{1i}\times h_{1j}$. By a similar argument as above we can show that the contribution of the $-1$-eigenspaces coming from  $g_{1i}\times h_{1j}$ with
$$(i,j)\in I_1\times J_{2,even}\cup I_{2,even}\times J_1\cup I_{1,odd}\times J_{1,odd}\cup I_{1,even}\times J_{1,even}\cup I_2\times J_2$$
to the character $\omega_{\phi,\rho_X}$ is also 1.

Next, we consider the $-1$-eigenspace associated to $g_{1i}\times h_{1j}$ with $(i,j)\in I_1\times J_{2,odd}$. By using the tensor representation we can view $g_{1i}\times h_{1j}$ as an element in $\GL(m_{1i}n_{2j})$ and we let $k$ be the dimension of the $-1$-eigenspace of this matrix (it is easy to see that this dimension is an odd number). Then it is easy to see that the $-1$-eigenspace associated to $g_{1i}\times h_{1j}$ is $k$-copy of $M_{1i}\otimes N_{1j}$. This representation is obviously anomaly free and we can choose the character $\eta$ in the definition of anomaly free to be $\det(N_{1j})^{\frac{\dim(M_{1j})}{2}}$. Moreover, the epsilon factor associated to it is equal to $\epsilon(M_{1i}\otimes N_{1j})^k=\epsilon(M_{1i}\otimes N_{1j})$. Hence the contribution of this $-1$-eigenspace to $\omega_{\phi,\rho_X}$ is $\epsilon(M_{1i}\otimes N_{1j})\det(N_{1j})^{\frac{\dim(M_{1j})}{2}}$.

Similarly, we can show that the contribution of the $-1$-eigenspace associated to 
$$g_{1i}\times h_{1j},\;(i,j)\in I_{2,odd}\times J_1\cup I_{1,even}\times J_{1,odd}\cup I_{1,odd}\times J_{1,even}$$
to $\omega_{\phi,\rho_X}$ is also $\epsilon(M_{1i}\otimes N_{1j})\det(N_{1j})^{\frac{\dim(M_{1j})}{2}}$. This implies that $\omega_{\phi,\rho_X}$ is the same as $\chi_N\times \chi_M$. In particular, we have proved that for the Gan--Gross--Prasad model, Conjecture \ref{main conj} is the same as the epsilon dichotomy conjecture in \cite{GGP}.

\subsection{The models in \cite{WZ}}
In this subsection, we will show that Conjecture \ref{main conj} recovers the epsilon dichotomy conjecture of the 10 models considered in \cite{WZ}. We will only consider the most complicated model $(E_7,\PGL_2\ltimes N)$. The other models in \cite{WZ} follows from a similar and easier argument. We will first prove Conjecture \ref{conj anomaly endoscopy}. Then as in \cite{WZ}, by assuming the weak conjecture (Conjecture 1.6 of \cite{WZ}) holds for the model $(E_7,\PGL_2\ltimes N)$, we will prove Conjecture \ref{main conj}.

We first prove Conjecture \ref{conj anomaly endoscopy} for this model. In this case, ${}^LG_X={}^LG=\hat{G}\times W_F'$ where $\hat{G}=E_{7,sc}(\BC)$ is the simply connected form of $E_7$ and $\rho_X$ is the 56-dimensional representation of $E_{7,sc}(\BC)$. To prove Conjecture \ref{conj anomaly endoscopy}, we only need to prove the following proposition.

\begin{prop}
For $s\in \hat{G}_{ss}$, let $V_{s,-}$ be the $-1$-eigenspace of $\rho_X(s)$. The representation of $\hat{G}_{s}$ on $V_{s,-}$ is anomaly free.
\end{prop}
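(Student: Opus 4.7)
The plan is to establish the proposition by a case-by-case analysis of the possible centralizers $\hat{G}_s$ in $\hat{G}=E_{7,sc}(\BC)$. First, since $E_{7,sc}$ is simply connected, Steinberg's theorem guarantees that $\hat{G}_s$ is connected and reductive for every semisimple $s$; it always contains a maximal torus $T$ of $\hat{G}$, which is split over $\BC$. Thus the first condition in Definition \ref{defn:anomaly} is automatic and one is left to verify the character-squared condition $\det(\rho)=\chi^2\cdot\eta|_{{}^LT}$.

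The second step is to enumerate the possible isomorphism types of $\hat{G}_s$. By Borel--de Siebenthal applied to $E_7$, the semisimple part of $\hat{G}_s$ is associated to a closed sub-root system of $E_7$ obtained by iteratively removing a node from an (extended) Dynkin diagram. The maximal proper instances are $E_6\cdot T_1$, $D_6\cdot A_1$, $A_7$, and $A_5\cdot A_2$; every other $\hat{G}_s$ is a Levi subgroup of one of these. This yields a short, explicit list.

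Third, for each entry of the list I would restrict the $56$-dimensional representation $\rho_X$ to $\hat{G}_s$ via standard branching rules: for example $56|_{E_6\cdot T_1}=27_1\oplus\ol{27}_{-1}\oplus 1_3\oplus 1_{-3}$, $56|_{D_6\cdot A_1}=(12,2)\oplus(32,1)$, $56|_{A_7}=\Lambda^2\oplus\Lambda^6=\Lambda^2\oplus(\Lambda^2)^\vee$, and $56|_{A_5\cdot A_2}=(20,1)\oplus(6,3)\oplus(\ol{6},\ol{3})$. The semisimple element $s$ acts by a scalar on each summand, and $V_{s,-}$ is the sum of those summands on which the scalar is $-1$. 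In the majority of sub-cases the surviving summands pair up into $\hat{G}_s$-duals, giving a $\hat{G}_s$-stable polarization $V_{s,-}=\rho_0\oplus\rho_0^\vee$, and anomaly-freeness follows immediately from Remark \ref{rmk:anomaly-cases}(3).

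I expect the genuine work to lie in the residual sub-cases in which $V_{s,-}$ contains a self-dual irreducible summand on which $s$ acts by $-1$ (for instance the spin factor $(32,1)$ of $D_6\cdot A_1$ when $s$ is the nontrivial central element of the $A_1$, or the $(20,1)$ of $A_5\cdot A_2$). In such cases no $\hat{G}_s$-stable polarization exists and one must verify the determinant identity directly on the cocharacter lattice. Since the $56$ is minuscule, its weights lie in a single $W(E_7)$-orbit, so the sum of the ``positive'' weights of $V_{s,-}|_T$, with respect to any chosen torus-level polarization, admits an explicit expression in the $E_7$ weight lattice; one then exhibits a character $\eta$ of $\hat{G}_s$, supplied by a character of its connected center, and an element of $X^*(T)$ whose double accounts for the remainder. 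By Remark \ref{rmk:anomaly-cases}(4) each self-dual summand may be treated independently, reducing the problem to a small and explicit collection of lattice congruences that form the hard core of the argument.
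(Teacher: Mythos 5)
Your proposal follows essentially the same strategy as the paper: reduce via maximal-rank subgroups of $E_{7,sc}(\BC)$, branch the $56$-dimensional minuscule representation, dispose of summands that come with a $\hat G_s$-stable polarization $\rho_0\oplus\rho_0^\vee$ by Remark~\ref{rmk:anomaly-cases}(\ref{item:dist-pol}), and check the residual self-dual summands (e.g.\ the half-spin $32$ of $D_6$, the $\wedge^3$ of $A_5$) by a direct lattice computation. The paper organizes the reduction a little differently: it first treats \emph{elliptic} $s$ using the explicit list of centralizers and $-1$-eigenspaces already tabulated in Section~2.5 of \cite{WZ}, and then handles non-elliptic $s$ by passing to the maximal Levi subgroups $L_1,\dots,L_7$ and recursing; you instead invoke Borel--de Siebenthal directly and enumerate the maximal pseudo-Levis $E_6\cdot T_1$, $D_6\cdot A_1$, $A_7$, $A_5\cdot A_2$. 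Both amount to the same inductive scheme, and both stop short of actually carrying out the determinant-square congruence for the self-dual summands, which the paper asserts ``follows from an easy direct computation.''

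Two small points to tighten. First, the statement that ``every other $\hat G_s$ is a Levi subgroup of one of these'' is not quite accurate: centralizers of semisimple elements in a simply connected group are exactly the pseudo-Levi subgroups, i.e.\ those obtained by \emph{iterating} the Borel--de Siebenthal operation, and such iterates need not be Levi subgroups of a single maximal-rank subgroup (for instance $D_4\times A_1\times A_1\times A_1$ arises inside $D_6\times A_1$ by applying Borel--de Siebenthal again to the $D_6$ factor; the paper's case list for the non-elliptic recursion reflects this). This does not damage the induction, since one can still pass to any proper maximal-rank subgroup containing $\hat G_s$ and recurse, but the phrasing should be ``pseudo-Levi'' rather than ``Levi.'' Second, your Steinberg/connectedness remark correctly disposes of the first condition of Definition~\ref{defn:anomaly} (everything in sight is split over $\BC$), and this is indeed used implicitly in the paper. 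The genuine content is therefore, as you say, the handful of lattice congruences for the self-dual pieces; the paper's contribution at that point is to list exactly which five representations need to be checked and to assert the check, so your proposal and the paper are on equal footing as to what is left to the reader.
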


\begin{proof}
We use $\rho_{X,s,-}$ to denote the representation of $\hat{G}_{s}$ on $V_{s,-}$. If $s$ is elliptic, the representation $\rho_{X,s,-}$ was described in Section 2.5 of our previous paper \cite{WZ}. From there it is easy to see that $\rho_{X,s,-}$ is anomaly free, we just need to use the fact that the following representations are anomaly free (which follows from an easy direct computation):
\begin{itemize}
\item the representation $\rho_X$ of $\hat{G}$;
\item the tensor representation of $Spin_{12}(\BC)\times \SL_2(\BC)/(\BZ/2\BZ)$;
\item the Half-Spin representation of $Spin_{12}(\BC)$;
\item the exterior square L-function of $\SL_6(\BC)/(\BZ/3\BZ)$;
\item the representation $\wedge^2\otimes std$ representation of $\SL_4(\BC)\times \SL_2(\BC)/(\BZ/4\BZ)$.
\end{itemize}

When $s$ is not elliptic, consider the following Dynkin diagram of $E_7$.

\begin{figure}[h!]
\begin{tikzpicture}[inner sep=0.5mm,scale=0.75]
\node [circle,draw,label=below:${\alpha_1}$] (1)  at ( 0,0)  {}; 
\node [circle,draw,label=below:${\alpha_2}$] (3) at ( 1,0) {};
\node [circle,draw,label=below:${\alpha_3}$] (4) at ( 2,0) {};
\node [circle,draw,label=below:${\alpha_4}$] (5) at ( 3,0) {}; 
\node [circle,draw,label=below:${\alpha_5}$] (6)  at (4,0) {};
\node [circle,draw,label=below:${\alpha_6}$] (7) at ( 5,0) {};
\node [circle,draw,label=right:${\alpha_7}$] (2)  at ( 3,1) {}; 
\draw  (1) -- (3);
\draw  (3) -- (4);
\draw  (2) -- (5);
\draw  (4) -- (5);
\draw  (5) -- (6);
\draw  (6) -- (7);
\end{tikzpicture}

\end{figure}
Let $L_i$ be the maximal Levi subgroup of $\hat{G}$ associated to the simple roots $\Delta-\{\alpha_i\}$ for $1\leq i\leq 7$ with $\Delta=\{\alpha_1,\alpha_2,\cdots,\alpha_7\}$. It is enough to show that $\rho_X|_{L_i}$ is anomaly free under endoscopy for all $i$. If $i=4,6,7$, the restriction of $\rho_X$ to $L_i$ is of the form $\rho_X=\rho\oplus \rho^\vee$ for some representation $\rho$ of $L_i$. Hence we know that $\rho_X|_{L_i}$ is anomaly free under endoscopy. 

If $i=1$ (resp. 2, 3, 5), then the restriction $\rho_X$ to $L_i$ is of the form $\rho\oplus \rho^\vee\oplus \rho'$ where $\rho$ is a representation of $L_i$ and $\rho'$ is the Half-Spin representation of $Spin_{12}(\BC)\times \GL_1(\BC)/(\BZ/2Z)$ (resp. the exterior cube representation of $\SL_6(\BC)/(\BZ/3\BZ)$, $\wedge^2\otimes std$ representation of $\SL_4(\BC)\times \SL_2(\BC)/(\BZ/4\BZ)$, the tensor product representation of $Spin_{10}(\BC)\times \SL_2(\BC)/(\BZ/4\BZ)$). It remains to show that $\rho'$ is anomaly free under endoscopy. The argument is the same as above and we will skip it here (i.e. we first consider the elliptic elements of $L_i$ for which we can explicitly write down the representation $V_{s,-}$ and show that it is anomaly free, then we can further reduce to maximal Levi subgroup of $L_i$). This proves the proposition.
\end{proof}

Next, we assume the weak conjecture (Conjecture 1.6 of \cite{WZ}) holds for the model $(E_7,\PGL_2\ltimes N)$, we will prove Conjecture \ref{main conj}. Let $(G,H)$ and $(G_D,H_D)$ be as in Section 8 of \cite{WZ}. Let $\phi:W_F'\rightarrow {}^LG$ be a tempered L-parameter of $G$ and let $\omega_{\phi}$ be the character of $S_\phi$ corresponds to the unique distinguished element in the L-packet $\Pi_\phi=\Pi_\phi(G)\cup \Pi_\phi(G_D)$. We need to show that $\omega_{\phi,\rho_X}=\omega_{\phi}$. In our previous paper (Section 2 of \cite{WZ}), we have defined a function $\omega_{\phi, H}$ on the elliptic elements of $Z_\phi$ and we have proved in Section 8 of \cite{WZ} that $\omega_{\phi, H}=\omega_\phi$ on the elliptic elements of $Z_\phi$. It is clear from the definition that the functions $\omega_{\phi, H}$ and $\omega_{\phi,\rho_X}$ are the same on all the elliptic elements formulas. Hence by using the result in Section 8 of \cite{WZ} we proved that  $\omega_{\phi,\rho_X}(s)=\omega_{\phi, H}(s)$ for all $s\in Z_\phi$ with $s$ elliptic. 

When $s$ is not elliptic, the argument is the same as the elliptic case in Section 8 of \cite{WZ}. Namely, let $(G',s,{}^L\eta)$ be the extended endoscopic triple such that $\phi$ factors through ${}^L\eta$ (i.e. there exists $\phi_0:W_F'\rightarrow {}^LG'$ such that $\phi={}^L\eta\circ \phi_0$). Then we study the behavior of the geometric multiplicity of the model under endoscopy between $G$ and $G'$. When $s$ is elliptic, this is done in our previous paper (Section 8.3 of \cite{WZ}). If $s$ is non-elliptic, the argument is the same. We can first pass from $G$ to its Levi subgroup $L_s$ (this step has already been done in Proposition 8.1 of \cite{WZ}), then we can study the endoscopic transfer of the geometric multiplicity between $L_s$ and $G'$ (the argument is the same as the one in Section 8.3 \cite{WZ}). After we proved this identity, we get some formulas of distributions on $G'$ which can be related to some models of $G'$. Then by using the weak conjecture, we can relate it to a certain epsilon factor and prove that $\omega_{\phi,\rho_X}(s)=\omega_{\phi, H}(s)$. Since the argument is the same as the elliptic case in Section 8 of \cite{WZ}, we will only list the models related to $G'$ and skip the remaining details.

\begin{itemize}
\item If $L_s$ is associated to one of the following subsets of $\Delta$ 
$$\{\alpha_4,\alpha_6,\alpha_7\},\{\alpha_4,\alpha_6,\alpha_7,\alpha_i\},\;i=1,2,3,5,$$
$$\{\alpha_4,\alpha_6,\alpha_7,\alpha_i,\alpha_j\},\;\{i,j\}=\{1,2\},\{1,3\},\{1,5\},\{2,5\},\{3,5\},$$
$$\Delta\smallsetminus\{\alpha_i\},\;i=2,3,$$
it is of Type A and we must have $G'=L_s$. In this case, the model related to $G'$ is the model $(G', G'\cap H)$ as in Proposition 8.1 of \cite{WZ}.
\item If $L_s$ is associated to $\{\alpha_2,\alpha_3,\alpha_4,\alpha_6,\alpha_7\}$, $L_s$ is of Type $D_4\times A_1$. In this case, $G'$ is either equal to $L_s$ or of the type $(A_1)^5$. If $G'$ is of Type $(A_1)^5$, the model related to $G'$ is the trilinear $\GL_2$ model. If $G'=L_s$, the model related to $G'$ is the model $(G',G'\cap H)$.
\item If $L_s=L_5$, $L_s$ is of Type $D_5\times A_1$. In this case, $G'$ is either equal to $L_s$ or of the type $A_3\times A_1\times A_1\times A_1$. If $G'=L_s$, the model related to $G'$ is the model $(G',G'\cap H)$. If $G'$ is of the Type $A_3\times A_1\times A_1\times A_1$, the models related to $G'$ are the model $(\GL_4\times \GL_2,\GL_2\times \GL_2)$ and the trilinear $\GL_2$-model.
\item If $L_s=L_1$, $L_s$ is of Type $D_6$. In this case, $G'$ is equal to $L_s$, of Type $D_4\times A_1\times A_1$ or of Type $A_3\times A_3$. If $G'=L_s$, the model related to $G'$ is the model $(G',G'\cap H)$. If $G'$ is of the Type $D_4\times A_1\times A_1$, the model related to $G'$ is the model $(\GSO_8\times \GL_2,\GL_2\ltimes N)$. If $G'$ is of Type $A_3\times A_3$, the model related to $G'$ is the Whittaker model.
\item For all the other cases, the model related to $G'$ is the Whittaker model.
\end{itemize}

\section{Some other examples without multiplicity one}\label{sec non-multiplicity one case}
In this section, we discuss some models with more than one Borel orbits. We will show that our conjecture holds for these cases.

\subsection{The model $(\SL_2,\GL_1)$}

In this section we consider the model $(G,H)=(\SL_2,\GL_1)$ (i.e. $H$ is a maximal split tori of $G$). In this case, ${}^LG=\PGL_2(\BC)$ and ${}^LG_X=\SL_2(\BC)$. For a tempered parameter $\phi$ of $G(F)$, the central character of the packet $\Pi_\phi(G)$ is trivial if and only if there is a lifting $\phi'$ of $\phi$ to $G_X(F)$. In this case, the set of liftings $I'$ contains $|F^{\times}/(F^{\times})^2|/|S_\phi|$ many elements (any two different liftings are differed by a twist of quadratic character). The L-packet $\Pi_\phi(G)$ contains $|S_\phi|$ many representations and it is easy to see that each of them has multiplicity $|F^{\times}/(F^{\times})^2|/|S_\phi|=|I'|$. In fact, let $J$ be the set of quadratic characters $\eta$ such that $\phi'\simeq \phi'\otimes \eta$. Then $|I'|=|F^{\times}/(F^{\times})^2|/|J|$ and $|S_\phi|=|J|$.

The representation $\rho_X$ of ${}^LG=\SL_2(\BC)$ is just $std\oplus std$. It is easy to see that Conjecture \ref{conj anomaly endoscopy}, \ref{main conjecture for rho} hold and the character $\omega_{\phi',\rho_X}$ of $S_{\phi'}$ is just the trivial character. As a result, the set $I$ is equal to $I'$ and the set 
$$\{\chi_{\phi',\rho_X,i}|\;1\leq i\leq |S_\phi/S_{\phi'}|,\;\phi'\in I\}$$
contains all the characters of $S_\phi$, each of them appears exactly $|I|=|F^{\times}/(F^{\times})^2|/|S_\phi|$ times. The choice of Whittaker datum does not matter in this case since the map $\ker(H^1(F, Z_{G, H})\rightarrow H^1(F, H)) \rightarrow \ker(H^1(F, G)\rightarrow H^1(F, G))$ is a bijection. This proves Conjecture \ref{main conj}.

\subsection{The model $(\SL_2,E^1)$}
In this section, we consider the model $(G,H)=(\SL_2,E^1)$ where $E/F$ is a quadratic extension, $\eta_{E/F}$ is the quadratic character associated to $E/F$ and $E^1=ker(\eta_{E/F})$ (i.e. $H$ is a maximal elliptic tori of $F$). As in the previous case, we have ${}^LG=\PGL_2(\BC)$ and ${}^LG_X=\SL_2(\BC)$. For a tempered parameter $\phi$ of $G(F)$, the central character of the packet $\Pi_\phi(G)$ is trivial if and only if there is a lifting $\phi'$ of $\phi$ to $G_X(F)$, the set of liftings $I'$ contains $|F^{\times}/(F^{\times})^2|/|S_\phi|$ many elements and the L-packet $\Pi_\phi(G)$ contains $|S_\phi|$ many representations.

For this model, $X(F)$ is not equal to $G(F)/H(F)$ and it is equal to $X(F)=G(F)/H(F)\cup G(F)/H'(F)$ where $H'(F)$ is another maximal elliptic tori of $G(F)$ that is isomorphic to $E^1$ (if $\eta_{E/F}(-1)=1$ then $H'$ is not conjugated to $H$; if $\eta_{E/F}(-1)=-1$ then we may just choose $H'$ to be $H$). Hence Conjecture \ref{main conj} studies the multiplicity
$$m(\pi)=\dim(\Hom_{H(F)}(\pi,1))+\dim(\Hom_{H'(F)}(\pi,1)).$$
In this case, any maximal elliptic tori of $G(F)$ that is isomorphic to $E^1$ is either conjugated to $H$ or $H'$. Since any two representations in the L-packet $\Pi_\phi(G)$ can be conjugated to each other by an element of $\GL_2(F)$, we know that the multiplicity $m(\pi)$ is constant among representations in the L-packet $\Pi_\phi(G)$. 

If we view $\PGL_2$ (resp. $E^1$) as $\SO_3$ (resp. $\SO_2$), then $\rho_X$ is just the 4-dimensional tensor representation of ${}^L\SO_3\times {}^L\SO_2$. It is easy to see that Conjecture \ref{conj anomaly endoscopy} and Conjecture \ref{main conjecture for rho} hold in this case. Moreover, the component group $S_{\phi'}$ ($\phi'\in I'$ and $\pi_{\phi'}$ is the irreducible tempered representation of $\PGL_2(F)$ associated to $\phi'$) is either trivial or $\BZ/2\BZ$. If it is trivial, we have
$$\eta_{E/F}(-1)\epsilon(\frac{1}{2},\pi_{\phi'},\rho_X)=1.$$
If it is equal to $\BZ/2\BZ$, then the character $\omega_{\phi',\rho_X}$ is trivial (resp. the sign character) if $\eta_{E/F}(-1)\epsilon(\frac{1}{2},\pi_{\phi'},\rho_X)=1$ (resp. $\eta_{E/F}(-1)\epsilon(\frac{1}{2},\pi_{\phi'},\rho_X)=-1$). This implies that
$$I=\{\phi'\in I'|\;\eta_{E/F}(-1)\epsilon(\frac{1}{2},\pi_{\phi'},\rho_X)=1\}$$
and the set 
$$\{\chi_{\phi',\rho_X,i}|\;1\leq i\leq |S_\phi/S_{\phi'}|,\;\phi'\in I\}$$
contains all the characters of $S_\phi$, each of them appears exactly $|I|$ times. Moreover, for any $\phi'\in I'$, if we let $J'$ be the set of quadratic characters $\eta$ such that 
$$\eta_{E/F}(-1)\epsilon(\frac{1}{2},\pi_{\phi'}\otimes \eta,\rho_X)=1,$$
then $|I|=\frac{|J'|}{|J|}$. Here we recall from the previous subsection that $J$ is the set of quadratic characters $\eta$ such that $\phi'\simeq \phi'\otimes \eta$.

Let $\phi'$ be an element in $I'$. We can view $\pi_{\phi'}$ as a tempered representation of $\GL_2(F)$ with trivial central character and we have $\Pi_\phi(G)=\pi_{\phi'}|_{\SL_2(F)}$. The model $(\PGL_2,E^1)$ is the famous Waldspurger model and we let $m'(\pi_{\phi'})$ be the multiplicity of $\pi_{\phi'}$ with respect to this model.  The epsilon dichotomy conjecture for the Waldspurger model implies that 
$$m'(\pi_{\phi'})=1\iff \eta_{E/F}(-1)\epsilon(\frac{1}{2},\pi_{\phi'},\rho_X)=1;\;m'(\pi_{\phi'})=0\iff \eta_{E/F}(-1)\epsilon(\frac{1}{2},\pi_{\phi'},\rho_X)=-1.$$
Hence we have ($\eta$ runs over all the quadratic characters modulo the subgroup $\{1,\eta_{E/F}\}$)
$$\dim(\Hom_{H(F)}(\Pi_\phi(G),1))=\dim(\Hom_{H'(F)}(\Pi_\phi(G),1))=\sum_{\eta} m'(\pi_{\phi'}\otimes \eta)=\frac{|J'|}{2}=\frac{|I|\cdot |S_\phi|}{2}.$$
This implies that for any $\pi\in \Pi_\phi(G)$, we have $m(\pi)=|I|$. This proves Conjecture \ref{main conj}.

\begin{rmk}
By a similar argument we can also verify Conjecture \ref{main conj} for the triple product model of $\SL_2$ (i.e. $(G,H)=((\SL_2)^3,\SL_2)$) and $U_2$ (i.e. $(G,H)=((U_2)^3,U_2)$).
\end{rmk}

\subsection{The model $(U_6,U_2\ltimes N)$}
In this subsection, we discuss the unitary Ginzburg-Rallis model $(U_6, U_2\ltimes N)$ studied in our previous paper \cite{WZ1}. In this case, ${}^LG=\GL_6(\BC)\ltimes \BZ/2\BZ$ and ${}^LG_X=\SL_6(\BC)\ltimes \BZ/2\BZ$. For a tempered parameter $\phi$ of $G(F)$, the central character of the packet $\Pi_\phi(G)$ is trivial if and only if there is a lifting $\phi'$ of $\phi$ to $G_X(F)$. In this case, the map $S_{\phi'}\rightarrow S_\phi$ is injective, and we have $S_\phi=S_{\phi'}$ (resp. $|S_\phi/S_{\phi'}|=2$) if and only if $\phi'$ is not isomorphic to $\phi'\otimes \eta_{E/F}$ (resp. $\phi'$ is isomorphic to $\phi'\otimes \eta_{E/F}$). The set of liftings $I'$ contains $|2\cdot S_{\phi'}/S_{\phi}|$ many elements and we have $I=I'$.

The representation $\rho_X$ of ${}^LG_X=\SL_6(\BC)\ltimes \BZ/2\BZ$ is the 20-dim exterior cube representation. By a similar but easier argument as the Gan--Gross--Prasad model case we can prove Conjecture \ref{conj anomaly endoscopy} and \ref{main conjecture for rho} for this model. In this case, the set
$$\{\chi_{\phi',\rho_X,i}|\;1\leq i\leq |S_\phi/S_{\phi'}|,\;\phi'\in I\}$$
contains 2 elements. If the lifting is unique, then $|S_\phi/S_{\phi'}|=2$ and these two characters are the two characters of $S_\phi$ whose restriction to $S_{\phi'}$ is equal to $\omega_{\phi',\rho_X}$. If there are two liftings, then $S_\phi=S_{\phi'}$ and the two characters are just $\omega_{\phi',\rho_X}$ for $\phi'\in I$ (this is equivalent to we only consider one lifting but we consider $\rho_X$ and $\rho_X\otimes \eta_{E/F}$).

The choice of the Whittaker model does not matter since the map $\ker(H^1(F, Z_{G, H})\rightarrow H^1(F, H)) \rightarrow \ker(H^1(F, G)\rightarrow H^1(F, G))$ is a bijection.

We will prove Conjecture \ref{main conj} in this case by assuming Conjecture 1.6 of \cite{WZ} holds for the model $(GU_6, GU_2\ltimes N)$ \footnote{In our paper \cite{WZ2} we have proposed a method to prove this conjecture, and we will prove it in our next paper.}. Like the argument in Section 5 of \cite{WZ}, the key is to study the behavior of the multiplicity under endoscopy.

First, we consider the case when there are two liftings. In this case, the two choices of Whittaker data give the same parametrization of the L-packet. We use $\phi_i'$ ($i=1,2$) to denote these two liftings. In this case $S_\phi=S_{\phi_i'}$. For $s\in S_\phi=S_{\phi_i'}$, as in Section 5.4 of \cite{WZ}, we can choose $s'\in sZ_{\phi_i'}^{\circ}$ so that $s'$ is conjugated to $\pm I_6$ or $\pm diag(I_4,-I_2)$. The value of $\omega_{\phi_i',\rho_X}(s)$ is defined in Section 2.5 of \cite{WZ}. Let 
$$I_G=\{i|\;\eta_{E/F}(-1)\epsilon(\frac{1}{2},\Pi_{\phi_i'},\rho_X)=\varepsilon_G\}$$
where $\varepsilon_G$ is equal to 1 (resp. $-1$) if $G$ is quasi-split (resp. non quasi-split). Then $I_G$ is the set of $i$ such that $\omega_{\phi_i',\rho_X}$ corresponds to a representation in $\Pi_\phi(G)$.

By Conjecture 1.6 of \cite{WZ} and Proposition 5.2 of \cite{WZ1}, the multiplicity of the L-packet $\Pi_\phi(G)$ is equal to $|I_G|$. If $|I_G|=0$, then the two characters $\omega_{\phi_i',\rho_X}$ do not correspond to a representation in $\Pi_\phi(G)$ (both of them correspond to a representation of the pure inner form of $G$). This proves Conjecture \ref{main conj}. If $|I_G|=2$, then the two characters $\omega_{\phi_i',\rho_X}$ both correspond to a representation in $\Pi_\phi(G)$. Also in this case the L-packet $\Pi_\phi(G)$ has multiplicity two and we let $\omega_{\phi, i}$ ($i=1,2$) be the two characters correspond to the distinguished elements in $\Pi_\phi(G)$ (these two characters may be the same).
In this case, by the same argument as in Section 5.4 of \cite{WZ}, we can show that (note that in this case the multiplicity formula was proved in \cite{WZ1} and we can prove the endoscopic identity of the geometric multiplicity by the same argument as in Proposition 5.8 of \cite{WZ})
$$\omega_{\phi,1}(s)+\omega_{\phi,2}(s)=\sum_{\pi\in \Pi_\phi(G)}\chi_\pi(s)m(\pi)=\omega_{\phi_1',\rho_X}(s)+\omega_{\phi_2',\rho_X}(s),\;\forall s\in S_\phi.$$
This implies that $\{\omega_{\phi,1},\omega_{\phi,2}\}=\{\omega_{\phi_1',\rho_X},\omega_{\phi_2',\rho_X}\}$ and proves Conjecture \ref{main conj} in this case. If $|I_G|=1$, we may assume that $\phi_1'\in I_G$. In this case, the L-packet $\Pi_\phi(G)$ has multiplicity one and we let $\omega_{\phi}$  be the character corresponding to the distinguished elements in $\Pi_\phi(G)$. In this case, by the same argument as in Section 5.4 of \cite{WZ}, we can show that 
$$\omega_{\phi}(s)=\sum_{\pi\in \Pi_\phi(G)}\chi_\pi(s)m(\pi)=\omega_{\phi_1',\rho_X}(s),\;\forall s\in S_\phi.$$
This implies that $\omega_{\phi}=\omega_{\phi_1',\rho_X}$ and proves Conjecture \ref{main conj}.

Next, we consider the case when the lifting is unique and we use $\phi'$ to denote this lifting. In this case, if $\eta_{E/F}(-1)\epsilon(\frac{1}{2},\Pi_{\phi'},\rho_X)=-\varepsilon_G$, the multiplicity of the L-packet $\Pi_\phi(G)$ is equal to 0 and the two characters in $I_{S_{\phi'}}^{S_\phi}(\omega_{\phi',\rho_X})$ does not correspond to a representation in $\Pi_\phi(G)$ (both of them corresponds to a representation of the pure inner form of $G$). This proves Conjecture \ref{main conj}. If $\eta_{E/F}(-1)\epsilon(\frac{1}{2},\Pi_{\phi'},\rho_X)=\varepsilon_G$, the multiplicity of the L-packet $\Pi_\phi(G)$ is equal to 2 and the two characters in $I_{S_{\phi'}}^{S_\phi}(\omega_{\phi',\rho_X})$ both correspond to a representation in $\Pi_\phi(G)$. We let $\chi_1,\chi_2$ be these two characters and we let $\omega_{\phi, i}$ ($i=1,2$) be the two characters corresponding to the distinguished elements in $\Pi_\phi(G)$. In this case, the two representations that correspond to $\{\chi_1,\chi_2\}$ are independent of the choice of the Whittaker datum. In fact, if one choice of Whittaker datum $\omega_{\phi, i}$ corresponds to some representation $\pi_i$ of the L-packet, then under the other choice of Whittaker datum $\omega_{\phi,1}$ (resp. $\omega_{\phi,2}$) corresponds to $\pi_2$ (resp. $\pi_1$).

By the same argument as in Section 5.4 of \cite{WZ}, we can show that 
$$\omega_{\phi,1}(s)+\omega_{\phi,2}(s)=\sum_{\pi\in \Pi_\phi(G)}\chi_\pi(s)m(\pi)=\chi_1(s)+\chi_2(s),\;\forall s\in S_{\phi'}.$$
Since $\chi_1$ and $\chi_2$ are the only two characters of $S_\phi$ whose restriction to $S_{\phi'}$ is equal to $\omega_{\phi',\rho_X}$, we only need to show that $\omega_{\phi,1}\neq \omega_{\phi,2}$. Choose $s\in S_\phi-S_{\phi'}$ and we just need to show that $\omega_{\phi,1}(s)+\omega_{\phi,2}(s)=0$. We can fine $s'\in sZ_{\phi}^{\circ}$ such that $s'$ belongs to an elliptic endoscopic triple $(G',s',{}^L\eta)$ of $G$ with $G'=U_5\times U_1$ or $G'=U_3\times U_3$ and $\phi$ factors through ${}^LG'$. Then we need to study the behavior of the geometric multiplicity under the endoscopic transfer between $G$ and $G'$. By a similar but easier argument as in Proposition 5.8 of \cite{WZ}, we can show that $m_{geom}(\theta)=0$ if $\theta$ is the endoscopic transfer of a stable distribution $\theta'$ of $G'(F)$. Here $\theta$ is a quasi-character on $G(F)$ and the geometric multiplicity $m_{geom}(\theta)$ is defined in Section 5.2 of \cite{WZ1}. This implies that
$$\omega_{\phi,1}(s)+\omega_{\phi,2}(s)=\sum_{\pi\in \Pi_\phi(G)}\chi_\pi(s)m(\pi)=0.$$
Hence $\omega_{\phi,1}\neq \omega_{\phi,2}$ and this finishes the proof of Conjecture \ref{main conj}.


\begin{thebibliography}{99}

\bibitem[BSV]{BSV}
D. Ben-Zvi, Y. Sakellaridis, A. Venkatesh,
{\it Relative Langlands duality}.

\bibitem[GGP]{GGP}
W. Gan, B. Gross, D. Prasad, 
{\it Symplectic local root numbers, central critical L values, and restriction problems in the representation theory of classical groups.} 
Sur les conjectures de Gross et Prasad. I. Ast\'erisque No. 346 (2012), 1–109. ISBN: 978-2-85629-348-5

\bibitem[K]{K}
T. Kaletha,
{\it The local Langlands conjectures for non-quasi-split groups.} Families of Automorphic Forms and the Trace Formula, Simons Symposia, Springer 2016, 217-257.


\bibitem[Pras]{Pras}
		D.~Prasad, \textit{A 'relative' local Langlands correspondence}, arXiv preprint 2015, arXiv:1512.04347

\bibitem[Sa]{Sa}
Y. Sakellaridis,
{\it Spherical functions on spherical varieties.} Amer. J. Math., 135(5):1291-1381, 2013.

\bibitem[SV]{SV}
Y. Sakellaridis, A. Venkatesh, \emph{Periods and harmonic analysis
  on spherical varieties}, Ast\'{e}risque (2017), no.~396, viii+360.

\bibitem[SW]{SW}
Y. Sakellaridis, J. Wang,
{\it Intersection complexes and unramified L-factors}. J. Amer. Math. Soc., 35(3):799-910, 2022. 


\bibitem[Wal1]{Wal1}
J.-L. Waldspurger,
{\it Une formule int\'egrale reli\'ee \`a la conjecture locale de Gross--Prasad.} Compos. Math. 146(2010), no.5, 1180-1290.

\bibitem[Wal2]{Wal2}
J.-L. Waldspurger,
{\it Une formule int\'egrale reli\'ee \`a la conjecture locale de Gross--Prasad, 2e partie : extension aux repr\'esentations temp\'er\'ees.}  in "Sur les conjectures de Gross et Prasad. I" Ast\'erisque No. 346 (2012), 171-312




\bibitem[Wal]{Wal}
J.-L. Waldspurger,
{\it La conjecture locale de Gross-Prasad pour les repr\'esentations temp\'er\'ees des groupes sp\'eciaux orthogonaux,} Asterique 347 (2012).

\bibitem[Wan]{Wan}
C. Wan,
{\it On a multiplicity formula for spherical varieties.} J. Eur. Math. Soc. 24 (2022), no. 10, 3629-3678

\bibitem[WZ]{WZ}
C. Wan, L. Zhang,
{\it Multiplicities for some strongly tempered spherical varieties.} Submitted,  arXiv:2204.07977.

\bibitem[WZ1]{WZ1}
C. Wan, L. Zhang,
{\it The Multiplicity Problems for the Unitary Ginzburg-Rallis Models.} Accepted by Israel Journal of Mathematics, 50 pages.

\bibitem[WZ2]{WZ2}
C. Wan, L. Zhang,
{\it Multiplicity One for Some Models Related to Rankin-Selberg Integrals}. Preprint, 60 pages.





\end{thebibliography}
\end{document}